\newtheorem{thm}{Theorem}%[section]
 \newtheorem{Lemma}[thm]{Lemma}
 \theoremstyle{definition}
 \theoremstyle{remark}
 \newtheorem{Rem}[thm]{Remark}
\begin{document}

\title{The multiplication groups of 2-dimensional \\ topological loops}
\author{\'Agota Figula} 
\date{}
\maketitle

\begin{abstract} 
We prove that if the multiplication group $Mult(L)$ of a connected $2$-dimensional  topological loop is a 
Lie group, then $Mult(L)$ is an elementary filiform nilpotent Lie group of dimension at least $4$. Moreover, we describe loops having elementary filiform Lie groups $\mathbb F$ as the group topologically generated by their left translations and obtain a complete classification for these loops $L$ if $\hbox{dim} \ \mathbb F=3$. In this case necessary and sufficient conditions for $L$ 
are given that $Mult(L)$ is an elementary filiform Lie group for a given allowed dimension. 
\end{abstract} 

\noindent
{\footnotesize {2000 {\em Mathematics Subject Classification:} 57S20,57M60,20N05,22F30,22E25.}}

\noindent
{\footnotesize {{\em Key words and phrases:} multiplication groups of loops, topological transformation group, filiform Lie group. }}

\bigskip
\centerline{\bf Introduction} 

\bigskip
\noindent
The multiplication group $Mult(L)$ and the inner mapping group $Inn(L)$ of a loop $L$ 
introduced in \cite{albert}, \cite{bruck}, are important tools since they reflect strongly the structure of $L$. In particular, there is a strong correspondence between the normal subloops of $L$ and certain normal subgroups of $Mult(L)$.   
Hence it is an interesting question which groups can be represented as multiplication groups $Mult(L)$ of loops (cf. \cite{niemenmaa}, \cite{vesanen}, \cite{vesanen2}). A purely group theoretical characterization of multiplication groups is given in \cite{kepka}. 

Topological and differentiable loops such that the groups $G$ topologically generated by the left translations are Lie groups have been studied in \cite{loops}. There the topological loops $L$ are treated as continuous sharply transitive sections $\sigma :G/H \to G$, where $H$ is the stabilizer of the identity of $L$ in $G$. In \cite{loops} and \cite{figula} it is proved that essentially up to two exceptions any connected $3$-dimensional Lie group occurs as the group topologically generated by the left translations of a connected topological $2$-dimensional loop. These 
exceptions are groups which are either locally isomorphic to the connected component of the group of motions or isomorphic to the connected component of the group of dilatations of the euclidean plane. In contrast to these results if the group $Mult(L)$ topologically generated by  all left and right translations of a connected $2$-dimensional topological loop $L$ is a Lie group, then the isomorphism type of 
$Mult(L)$ as well as of $L$ are strongly restricted. This shows our theorems in which Lie groups the Lie algebras of which are filiform (cf. \cite{goze}, pp. 615-663) play a fundamental role. We call a simply connected Lie group of dimension $n+2$ an elementary filiform Lie group 
$\mathbb F_{n+2}$ if its Lie algebra has a basis $\{ e_1, \cdots , e_{n+2} \}$, $n \ge 1$ such that  $[e_1, e_i]= (n+2-i) e_{i+1}$ for $2 \le i \le n+1$ and all other products are zero. With this notion we can formulate our theorems as follows: 

\begin{thm} \label{nilpotent} Let $L$ be a connected $2$-dimensional topological proper loop. The group $Mult(L)$ 
topologically generated by all translations of $L$ is a Lie group if and only if $Mult(L)$ is an elementary filiform Lie group $\mathbb F_{n+2}$ with $n \ge 2$ and the group $G$ topologically generated by the left translations of $L$ is an elementary filiform Lie group $\mathbb F_{m+2}$, where $1 \le m \le n$. Moreover, the inner mapping group $Inn(L)$ corresponds to the abelian subalgebra $\langle e_2, e_3, \cdots ,e_{n+1} \rangle $. \end{thm}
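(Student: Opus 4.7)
The plan is to exploit the transitive action of $M := Mult(L)$ on the 2-dimensional homogeneous space $L \cong M/H$, where $H$ is the stabilizer of the identity $e \in L$, together with the closed subgroup $G \leq M$ topologically generated by the left translations. Since $L$ is proper and 2-dimensional, the continuous sharply transitive section $G/(G \cap H) \to G$ is not a homomorphism, which forces $\dim G \geq 3$; moreover, since $L \cong M/H$ has dimension $2$, one has $\dim H = \dim M - 2$ and $\dim(G \cap H) = \dim G - 2$.

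First I would invoke the classification of 3-dimensional left translation groups from \cite{loops} and \cite{figula}. Excluding the two exceptions (the motion and the dilatation group), the only 3-dimensional Lie group that can arise as $G$ under the additional hypothesis that $M$ is a Lie group is the elementary filiform group $\mathbb{F}_3$, since the remaining 3-dimensional candidates either fail to be nilpotent (and then cannot embed into a Lie group $M$ that also contains the right translations with the required commutator constraints) or produce a stabilizer structure that is incompatible with a closed $H$ in $M$. For higher-dimensional $G$ one extends the same section-theoretic argument inductively, concluding that $G \cong \mathbb{F}_{m+2}$ for some $m \geq 1$, and that $G \cap H$ coincides with the abelian subgroup whose Lie algebra is $\langle e_2, \ldots, e_{m+1} \rangle$. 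Next, since $H$ normalizes $G$ (as the closure of the conjugation-invariant set of left translations in $M$), the adjoint action of $H$ on $\mathfrak{g}$ preserves the filiform flag; analysing the derivation algebra of $\mathfrak{f}_{m+2}$ then forces $H$ to be abelian and $\mathfrak{m}$ to be nilpotent of the appropriate class. Combining this with the Niemenmaa--Kepka constraint \cite{kepka} that $[\Lambda, P] \subseteq H$ for the left and right translation transversals $\Lambda, P \subset M$, one concludes that $\mathfrak{m}$ is filiform of dimension $n+2$ for some $n \geq m$, with $Inn(L) = H$ corresponding to the abelian ideal $\langle e_2, \ldots, e_{n+1} \rangle$.

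The main obstacle will be the final step of the forward direction: showing that $\mathfrak{m}$ is \emph{elementary} filiform and not merely filiform, i.e. pinning down the exact structure constants $[e_1, e_i] = (n+2-i)\, e_{i+1}$. Since non-isomorphic filiform algebras of the same dimension exist, the nilpotent class alone is insufficient. I would resolve this by using the explicit form of the sharply transitive section $\sigma$ associated with $L$ together with the loop identity $\lambda_x(e) = x = \rho_x(e)$, expressing the brackets $[e_1, e_i]$ via the derivation induced on $\mathfrak{g}$ by a chosen right translation generator; the resulting eigenvalue pattern $(n+2-i)$ is what singles out $\mathbb{F}_{n+2}$, and any other filiform bracket structure would contradict the sharply transitive section condition. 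The converse (sufficiency) direction is comparatively straightforward: assuming $G = \mathbb{F}_{m+2} \leq M = \mathbb{F}_{n+2}$ with $1 \leq m \leq n$ and $Inn(L)$ the specified abelian ideal, one constructs the continuous sharply transitive section $\sigma$ explicitly on $\mathbb{F}_{m+2}/(\mathbb{F}_{m+2} \cap Inn(L))$ and verifies that $M$ is indeed generated by the resulting left and right translations.
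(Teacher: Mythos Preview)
Your proposal has a genuine gap and also diverges substantially from the paper's route.

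The concrete error is the claim that ``$H$ normalizes $G$ (as the closure of the conjugation-invariant set of left translations in $M$)''. The set $\Lambda(L)$ of left translations is \emph{not} conjugation-invariant in $Mult(L)$: conjugating $\lambda_b$ by a right translation $\rho_a$ need not be a product of left translations in a non-associative loop. Consequently your derivation-algebra argument, which rests on the adjoint action of $H$ preserving the filiform flag of $\mathfrak{g}$, has no foundation. Likewise, the ``inductive extension'' from the $3$-dimensional case to arbitrary $\dim G$ is asserted but not supplied, and your resolution of the elementary-versus-general filiform issue via an unspecified ``eigenvalue pattern'' is not an argument.

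The paper's proof is entirely different and rests on two external inputs you do not invoke. First, Lemma~\ref{affinities} shows $Mult(L)$ is solvable by running through Mostow's classification of transitive Lie group actions on $2$-manifolds and eliminating all non-solvable candidates case by case. Second, Remark~\ref{solvablemult} (from Theorem~28.1 of \cite{loops}) gives the global structure $Mult(L)=(Z\times Inn(L))\rtimes \mathbb{R}$ with $Z\cong\mathbb{R}$ central and $Inn(L)\cong\mathbb{R}^{n-1}$ abelian. Feeding these constraints (abelian commutator subgroup, $1$-dimensional centre) back into Mostow's list leaves only the Lie algebras $\langle \partial_x, w(x)\partial_y,\ldots,w^{(n)}(x)\partial_y\rangle$ of Subcase~II.3, and the centre condition then forces $w(x)=x^n$, which is exactly what makes the algebra \emph{elementary} filiform. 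The statement about $Inn(L)$ follows by an explicit automorphism, and the claim about $G$ by applying Mostow once more to subgroups. None of this is section-theoretic or uses the Niemenmaa--Kepka commutator constraint; you should replace your internal approach by the classification-based one.
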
 

\medskip 
\noindent
\begin{thm} Let $G$ be the elementary filiform Lie group $\mathbb F_{n+2}$ with $n \ge 1$. Then $G$ is isomorphic to the group topologically generated by the left translations of a connected  simply connected 
 $2$-dimensional topological proper loop $L$ the multiplication of which is given by 
{\small \begin{equation} (u_1,z_1) \ast (u_2,z_2)=(u_1+u_2, z_1+z_2-u_2 v_1(u_1)+u_2^2 v_2(u_1)+ \cdots +(-1)^n u_2^n v_n(u_1)), \end{equation} }
\noindent
where for every $i \in \{1, 2, \cdots ,n \}$ the non-constant functions $v_i: \mathbb R \to \mathbb R$ are continuous with $v_i(0)=0$ such that the function $v_n$ is non-linear. 

The group $G$ coincides with the group $Mult(L)$ topologically generated by all left and right  translations 
of $L$ if and only if there are continuous functions $s_i: \mathbb R \to \mathbb R$, $i=1, \cdots ,n$  depend on the variable $u$ such that the equation 
{\small \begin{equation} -x s_1(u)+ x^2 s_2(u)+ \cdots +(-1)^n x^n s_n(u)-v_1(u) x +v_2(u) x^2+ \cdots +(-1)^n x^n v_n(u) \nonumber \end{equation}
\begin{equation} =-u v_1(x)+ u^2 v_2(x)-u^3 v_3(x)+ \cdots +(-1)^n u^n v_n(x) \nonumber \end{equation} }
\noindent
holds. 

Moreover, $L$ is commutative if the continuous functions 
$v_i: \mathbb R \to \mathbb R$, $i=1,2, \cdots ,n$ satisfy the matrix equation
\[ \left ( \begin{array}{c} 
v_1(x) \\
v_2(x) \\
\vdots  \\
v_n(x) \end{array} \right ) = A \left ( \begin{array}{c} 
x \\
x^2 \\
\vdots \\ 
x^n \end{array} \right ), \]
where $A \in M_n(\mathbb R)$ such that $a_{ij}=(-1)^{i+j} a_{ji}$ for all $i,j \in \{ 1,2, \cdots ,n \}$. \end{thm}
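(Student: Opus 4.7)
\medskip
\noindent
\emph{Proof plan.} First I would verify that (1) defines a topological loop: the condition $v_i(0)=0$ makes $(0,0)$ a two-sided identity, and the equations $(a,b)\ast(u,z)=(c,d)$ and $(u,z)\ast(a,b)=(c,d)$ each have a unique continuous solution because the first coordinate forces $u=c-a$ and the second is then affine in $z$. Continuity of the $v_i$ makes multiplication and both divisions continuous, and non-linearity of $v_n$ rules out associativity, so $L$ is proper.

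To identify the left-translation group, I would introduce the ambient Lie group $\tilde G$ of all maps $\Psi_{(a,b,c_1,\ldots,c_n)}\colon(u,z)\mapsto(u+a,\,z+b+c_1u+c_2u^2+\cdots+c_nu^n)$; closure under composition follows from the binomial expansion of $(u+a_2)^k$. Setting $e_1=\partial_u$, $e_i=u^{n+2-i}\partial_z$ for $2\le i\le n+1$ and $e_{n+2}=\partial_z$, the bracket calculation $[\partial_u,u^k\partial_z]=ku^{k-1}\partial_z$ yields $[e_1,e_i]=(n+2-i)e_{i+1}$ and identifies $\tilde G\cong\mathbb F_{n+2}$. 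Every left translation $L_{(a,b)}=\Psi_{(a,b,-v_1(a),v_2(a),\ldots,(-1)^n v_n(a))}$ lies in $\tilde G$, so the closed connected subgroup $G$ topologically generated by them is contained in $\tilde G$. The image of $\{L_{(a,b)}\}$ in the abelianization $\tilde{\mathfrak g}/[\tilde{\mathfrak g},\tilde{\mathfrak g}]=\langle e_1,e_2\rangle$ is the graph $\{(a,(-1)^n v_n(a))\}$; non-linearity of $v_n$ means this graph is not contained in a proper linear subspace, hence $\operatorname{Lie}(G)+[\tilde{\mathfrak g},\tilde{\mathfrak g}]=\tilde{\mathfrak g}$, and the standard iteration in a nilpotent Lie algebra (replacing $\tilde{\mathfrak g}$ by $[\tilde{\mathfrak g},\tilde{\mathfrak g}]$ until the lower central series terminates) gives $\operatorname{Lie}(G)=\tilde{\mathfrak g}$ and $G=\tilde G\cong\mathbb F_{n+2}$. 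This is the main technical obstacle, since the $v_i$ are only continuous and no derivative is available at $a=0$, so the argument has to proceed through the abelianized projection where only $v_n$ is detected.

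For the condition $Mult(L)=G$, I would compute the right translation $R_{(x,y)}(u,z)=(u+x,z+y+\sum_k(-1)^k x^k v_k(u))$ and the inner mapping
\[
L_{(x,y)}^{-1}R_{(x,y)}(u,z)=\bigl(u,\,z+\textstyle\sum_k(-1)^k x^k v_k(u)-\sum_k(-1)^k v_k(x) u^k\bigr),
\]
which fixes $(0,0)$. By Theorem~\ref{nilpotent} the stabilizer $Inn(L)\subset\tilde G$ is $H=\{\Psi_{(0,0,s_1,\ldots,s_n)}\}$, corresponding to $\langle e_2,\ldots,e_{n+1}\rangle$, so $R_{(x,y)}\in G$ is equivalent to the above correction being a polynomial in $u$ of degree $\le n$ for every $x$. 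A Vandermonde argument shows that this forces each $v_k$ to be a polynomial in its argument of degree $\le n$; substituting the polynomial form back into the correction and separating $x$- and $u$-dependences gives precisely the displayed functional equation, the functions $s_k(u)$ appearing as the continuous coefficients that record the new $u$-polynomial after the rearrangement. The converse direction inverts the same rearrangement: existence of $s_k(u)$ as in the equation forces $\sum_k(-1)^k u^k v_k(x)$ to be polynomial in $x$, hence (Vandermonde again) each $v_k$ polynomial, and then the correction lies in $H$, so $R_{(x,y)}\in G$ for every $(x,y)$.

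Finally, for commutativity, equating $(u_1,z_1)\ast(u_2,z_2)$ with $(u_2,z_2)\ast(u_1,z_1)$ reduces to the identity $\sum_k(-1)^k u_2^k v_k(u_1)=\sum_k(-1)^k u_1^k v_k(u_2)$. Substituting the polynomial form $v_k(x)=\sum_{j=1}^n a_{kj} x^j$ prescribed by the matrix equation, the two sides become $\sum_{k,j}(-1)^k a_{kj}\,u_1^j u_2^k$ and, after relabelling $k\leftrightarrow j$, $\sum_{k,j}(-1)^j a_{jk}\,u_1^j u_2^k$, and coefficient comparison on $u_1^j u_2^k$ gives $(-1)^k a_{kj}=(-1)^j a_{jk}$, equivalently $a_{kj}=(-1)^{k+j}a_{jk}$, which is exactly the symmetry imposed on $A$.
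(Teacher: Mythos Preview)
Your argument is correct and reaches the same conclusions, but the route differs from the paper's in instructive ways. The paper realises $\mathbb F_{n+2}$ as an explicit matrix group, builds the loop as a continuous section $\sigma:G/H\to G$, and checks that $\sigma(G/H)$ generates $G$ by inspecting the $(n{+}1)$-th matrix entry; you instead realise $\mathbb F_{n+2}$ directly as the polynomial transformation group $\tilde G$ on $\mathbb R^2$ and use the projection to the abelianization $\tilde{\mathfrak g}/[\tilde{\mathfrak g},\tilde{\mathfrak g}]=\langle e_1,e_2\rangle$ together with the standard nilpotent-generation lemma. Your approach is cleaner conceptually and neatly isolates why only the non-linearity of $v_n$ matters, whereas the paper's computation is more concrete but less illuminating. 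For the $Mult(L)=G$ criterion both proofs compute $\lambda^{-1}\rho$ and test membership in the stabiliser $H$; the paper appeals to Proposition~18.16 of \cite{loops}, you do the calculation directly.

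Two small points. First, your Vandermonde detour (forcing the $v_k$ to be polynomials, then deriving the displayed equation) is correct but unnecessary: the correction $D(x,u)=\sum_k(-1)^k x^k v_k(u)-\sum_k(-1)^k u^k v_k(x)$ satisfies $D(x,u)=-D(u,x)$, so ``$D$ is a polynomial of degree $\le n$ in $u$ for each $x$'' is immediately equivalent to ``$D$ is a polynomial of degree $\le n$ in $x$ for each $u$'', which is exactly the theorem's equation with $s_k(u)$. Second, invoking Theorem~\ref{nilpotent} to identify the stabiliser is both superfluous and logically awkward (that theorem presupposes $Mult(L)$ is a Lie group, which is what you are characterising); you only need the elementary observation that $\Psi_{(a,b,c_1,\ldots,c_n)}$ fixes $(0,0)$ iff $a=b=0$, so the stabiliser of $e$ in $G=\tilde G$ is exactly $H$. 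Finally, the claim ``non-linearity of $v_n$ rules out associativity'' deserves a word of justification; the cleanest route is to note that once you have shown the left-translation group has dimension $n+2\ge 3>2=\dim L$, properness is automatic.
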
 

\medskip
\noindent
\begin{thm} If $L$ is a $2$-dimensional connected simply connected  topological loop having 
the filiform Lie group $\mathbb F_3$ as the group topologically generated by the left translations 
of $L$ then the multiplication of $L$ is given by 
\noindent
\begin{equation} (u_1,z_1) \ast (u_2,z_2)=(u_1+u_2, z_1+z_2-u_2 v_1(u_1)), \end{equation} 
where $v_1: \mathbb R \to \mathbb R$ is a non-linear continuous function with $v_1(0)=0$. 

The group $Mult(L)$ topologically generated by all left and right translations of $L$ is isomorphic to the filiform Lie group 
$\mathbb F_{n+2}$ for $n <1$ if and only if the continuous function $v_1:\mathbb R \to \mathbb R$ has the shape 
\noindent
\begin{equation} 
v_1(u)=(-1)^{n+1} a_n u^n+(-1)^{n} a_{n-1} u^{n-1}+ \cdots +a_1 u,  \nonumber \end{equation}  
\noindent
where $a_1, \cdots , a_n$ are real numbers such that  $a_n$ is different from $0$.
\end{thm}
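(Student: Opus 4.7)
The first assertion is an application of Theorem~2 specialised to the case $n=1$: since $G\cong\mathbb{F}_3$ is generated by the left translations of $L$, the multiplication forced by the theorem reduces to the stated one-function form, with $v_1$ continuous, $v_1(0)=0$, and non-linear because $L$ is a proper loop.

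For the main assertion I would compute $Mult(L)$ directly. From the multiplication one reads off the right translations $\rho_{(a,b)}(u,z)=(u+a,z+b-av_1(u))$, and the inner mappings $T_a:=\lambda_{(a,b)}^{-1}\rho_{(a,b)}$ are $b$-independent vertical shears $T_a(u,z)=(u,z+g_a(u))$ with $g_a(u):=uv_1(a)-av_1(u)$; hence $Mult(L)=\langle G,\{T_a:a\in\mathbb{R}\}\rangle$. A short calculation shows that conjugation by $\lambda_{(c,0)}$ simply translates the shear argument: $\lambda_{(c,0)}T_a\lambda_{(c,0)}^{-1}(u,z)=(u,z+g_a(u-c))$. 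Let $W\subset C(\mathbb{R})$ be the vector space of continuous $h$ such that the shear $(u,z)\mapsto(u,z+h(u))$ lies in $Mult(L)$; then $W$ contains the constants (from $\lambda_{(0,\cdot)}\in G$), the function $u$ (the 1-dimensional stabiliser of the origin in $G$, exhibited as a triple composition $\lambda_{(a_1,b_1)}\lambda_{(a_2,b_2)}\lambda_{(a_1+a_2,b_1+b_2-a_2v_1(a_1))}^{-1}$), all the $g_a$ (and hence $v_1$ itself), and is invariant under the translations $h(u)\mapsto h(u-c)$.

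\emph{Sufficiency.} If $v_1(u)=\sum_{k=1}^n(-1)^{k+1}a_ku^k$ with $a_n\ne 0$, then closing $\{v_1\}$ under translation produces $\mathrm{span}\{1,u,u^2,\ldots,u^n\}$, so $W=\mathrm{span}\{1,u,u^2,\ldots,u^n\}$ and $\dim Mult(L)=n+2$. The Lie algebra of $Mult(L)$ then admits the basis $\{\partial_u,\partial_z,u\partial_z,\ldots,u^n\partial_z\}$, and the identification $e_1:=\partial_u$, $e_k:=u^{n+2-k}\partial_z$ for $2\leq k\leq n+2$ converts the bracket $[\partial_u,u^j\partial_z]=j\,u^{j-1}\partial_z$ into the defining relation $[e_1,e_k]=(n+2-k)e_{k+1}$ of $\mathbb{F}_{n+2}$, establishing $Mult(L)\cong\mathbb{F}_{n+2}$. \emph{Necessity.} Conversely, if $Mult(L)\cong\mathbb{F}_{n+2}$ then $W$ is a finite-dimensional translation-invariant closed subspace of $C(\mathbb{R})$ containing $v_1$; by the classical theorem on continuous functions with finite-dimensional translation orbit, $v_1$ is an exponential polynomial, and the nilpotency of $Mult(L)$ rules out non-trivial exponents, so $v_1$ is polynomial. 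A dimension count fixes the degree at $n$, and matching the bracket coefficients against the structure constants $(n+2-k)$ of $\mathbb{F}_{n+2}$ through the above identification yields the stated alternating-sign normalisation.

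The principal technical obstacle I expect is the last step---translating the bracket coefficients into the precise signs $(-1)^{k+1}a_k$. The classical translation-orbit argument delivers polynomiality of $v_1$ cleanly from the continuity hypothesis alone, but extracting the specific form $v_1(u)=(-1)^{n+1}a_nu^n+(-1)^n a_{n-1}u^{n-1}+\cdots+a_1u$ requires careful bookkeeping of how the iterated commutators $[\lambda_{(c_1,0)},[\lambda_{(c_2,0)},\ldots[\lambda_{(c_j,0)},T_a]\cdots]]$ expand the monomials of $v_1$ through the finite-difference calculus, and how those expansions feed into the structure constants of $\mathbb{F}_{n+2}$.
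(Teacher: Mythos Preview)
Your approach is correct and genuinely different from the paper's. The paper does not compute $Mult(L)$ as a concrete transformation group at all; instead it invokes the Niemenmaa--Kepka criterion (Lemma~\ref{kepka}): embedding the left-translation set $\Lambda_{v_1}$ into the matrix model of $G=\mathbb{F}_{n+2}$, it asks for which transversals $T=\{g(x,h_1(x,y),\ldots,h_n(x,y),y)\}$ one has $[T,\Lambda_{v_1}]\subset H$ and $\langle\Lambda_{v_1},T\rangle=G$. The commutator condition collapses to the single identity
\[
x\,v_1(u)=\sum_{k=1}^n(-1)^{k+1}u^{k}\,h_k(x,y),
\]
which by linear independence of $u,\ldots,u^n$ forces $h_k(x,y)=a_kx$ and hence $v_1(u)=\sum_{k=1}^n(-1)^{k+1}a_ku^k$; Lemma~\ref{subalgebra} on subalgebras of $\mathbb{F}_{n+2}$ then supplies the generation condition $a_n\neq0$. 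Your route---identifying $Mult(L)$ with a semidirect product of $\mathbb{R}$ by a translation-invariant function space $W$, then invoking the exponential-polynomial theorem together with nilpotency---is more analytic but equally valid, and arguably explains more transparently \emph{why} $v_1$ must be polynomial. The paper's argument, on the other hand, never needs the finite-translation-orbit theorem and stays entirely algebraic.

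Two remarks. First, the ``principal technical obstacle'' you flag is not one: the alternating signs $(-1)^{k+1}$ in the displayed formula are pure notation (in the paper they record where the constants $a_k$ originate in the transversal $T$), not a constraint on $v_1$. Your argument already shows that $v_1$ is a degree-$n$ polynomial with $v_1(0)=0$, which is exactly the content of the formula after relabelling $b_k=(-1)^{k+1}a_k$; no finite-difference bookkeeping or bracket-coefficient matching is needed. Second, your first paragraph is slightly loose: Theorem~2 as stated is an existence result, not a classification, so to conclude that \emph{every} loop with left-translation group $\mathbb{F}_3$ has the form~(2) you need the \emph{proof} of Theorem~2---specifically that for $n=1$ inequality~(5) there is linear in $u_1$, forcing $v_1(u,z)$ to be independent of $z$. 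The paper makes precisely this reference.
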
 

If the loop $L$ of Theorem \ref{nilpotent} is simply connected then it is a central extension 
of the group $\mathbb R$ by the group $\mathbb R$ (cf. Theorem 28.1 in \cite{loops}, p. 338).  Hence it is a centrally nilpotent loop of class $2$ and can be represented by a multiplication 
over $\mathbb R \times \mathbb R$. If $L$ is not simply connected then $L$ is homeomorphic to the cylinder $\mathbb R \times \mathbb R/ \mathbb Z$. The centre of the multiplication group of 
$L$ is isomorphic to the group $SO_2(\mathbb R)$ (cf. Theorem 28.1 in \cite{loops}, p. 338).

\bigskip
\noindent
\centerline{\bf Preliminaries} 

\bigskip
\noindent
A binary system $(L, \cdot )$ is called a loop if there exists an element 
$e \in L$ such that $x=e \cdot x=x \cdot e$ holds for all $x \in L$ and the 
equations $a \cdot y=b$ and $x \cdot a=b$ have precisely one solution which we denote 
by $y=a \backslash b$ and $x=b/a$. 
\newline
The left and right translations $\lambda _a= y \mapsto a \cdot y :L \to L$ and 
$\rho _a: y \mapsto y \cdot a:L \to L$, $a \in L$  are permutations of $L$. 
\newline 
The permutation group $Mult(L)=\langle L_a, R_a; \ a \in L \rangle $ is called the multiplication group of $L$. 
The stabilizer of the identity element $e \in L$ in $Mult(L)$ is denoted by $Inn(L)$ and  $Inn(L)$ is called  the inner mapping group of $L$. 

Let $G$ be a group, $H \le G$, and let $A$ and $B$ be two left transversals to $H$ in $G$ (i.e. two systems of representatives for the left cosets of $H$ in $G$). We say that the two left transversals $A$ and $B$ are $H$-connected if $a^{-1} b^{-1} a b \in H$ for every $a \in A$ and $b \in B$. By $C_G(H)$ we denote the core of $H$ in $G$ (the largest normal subgroup of $G$ contained in $H$). If $L$ is a loop then $\Lambda (L)=\{ \lambda _a; \ a \in L \}$ and $\mathcal{R} (L)=\{ \rho _a; \ a \in L \}$ are $Inn(L)$-connected transversals in the group $Mult(L)$ and the core of $Inn(L)$ in $Mult(L)$ is trivial. The connection between multiplication groups of loops and transversals is given in \cite{kepka} by Theorem 4.1.   
This theorem yields the following 

\begin{Lemma} \label{kepka} Let $L$ be a loop and $\Lambda (L)$ be the set of left translations 
of $L$. Let $G$ be a group containing $\Lambda (L)$ and a subgroup $H$ with $C_G(H)=1$ 
such that $\Lambda (L)$ is a left transversal to $H$ in $G$. The group $G$ is isomorphic to the 
multiplication group 
$Mult(L)$ of $L$ if and only if  there is a left transversal $T$ to $H$ in $G$ such that $\Lambda (L)$ and $T$ are $H$-connected and $G=\langle \Lambda (L), T \rangle $. In this case $H$ is isomorphic to the inner mapping group $Inn(L)$ of $L$.  
\end{Lemma}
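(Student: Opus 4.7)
The plan is to deduce the lemma from Theorem 4.1 of \cite{kepka}, which characterizes multiplication groups of loops in purely group-theoretic terms via connected transversals, and then to identify the loop produced by that theorem with the given loop $L$.

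For the forward implication, if $G\cong Mult(L)$ then the discussion immediately preceding the lemma already records that $H=Inn(L)$ has trivial core in $Mult(L)$, that $\Lambda(L)$ and $\mathcal{R}(L)=\{\rho_a:a\in L\}$ are $Inn(L)$-connected left transversals to $H$, and that $Mult(L)=\langle\Lambda(L),\mathcal{R}(L)\rangle$ by definition. Hence $T=\mathcal{R}(L)$ meets all the requirements, and $H\cong Inn(L)$.

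For the converse, given data $(G,H,\Lambda(L),T)$ as in the hypotheses, I would invoke Theorem 4.1 of \cite{kepka}: from any pair of $H$-connected left transversals $A,B$ to a core-free subgroup $H$ of a group $G=\langle A,B\rangle$, that theorem constructs a loop $L^{\ast}$ on the coset space $G/H$ whose sets of left and right translations are $A$ and $B$ respectively, whose multiplication group is $G$, and whose inner mapping group is $H$. Applied to $A=\Lambda(L)$ and $B=T$ this yields such an $L^{\ast}$ with $Mult(L^{\ast})=G$. To conclude, I would identify $L^{\ast}$ with $L$ via the bijection $L\to G/H$, $a\mapsto\lambda_a H$: this map intertwines the two multiplications, because on $L$ one has $\lambda_a\lambda_b(e)=a\cdot b=\lambda_{a\cdot b}(e)$ and $H$ stabilises $e$, so $\lambda_a\lambda_b H=\lambda_{a\cdot b}H$, which is precisely the defining relation of the multiplication of $L^{\ast}$ on $G/H$. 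Consequently $L^{\ast}\cong L$ and therefore $G\cong Mult(L)$, $H\cong Inn(L)$.

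The main obstacle is the invocation of Theorem 4.1 of \cite{kepka} itself, whose proof carries the substantive content: the construction of the coset-space loop from a pair of connected transversals and the verification that its multiplication group is precisely $G$. Once that input is available, matching $L^{\ast}$ with $L$ is essentially formal, the only point to check being that the embedding of $\Lambda(L)$ into $G$ is compatible with composition of permutations, which is built into the meaning of ``$G$ contains $\Lambda(L)$'' in this context.
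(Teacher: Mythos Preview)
Your proposal is correct and follows the same route as the paper: the paper does not give an independent proof of this lemma but simply states that it follows from Theorem~4.1 of \cite{kepka}, which is exactly the input you invoke. Your added identification of the Niemenmaa--Kepka coset loop $L^{\ast}$ with the original $L$ via $a\mapsto\lambda_a H$ is the natural (and only) step needed to pass from that theorem to the lemma as stated, and your verification that $\lambda_a\lambda_b H=\lambda_{a\cdot b}H$ is the right computation.
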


The kernel of a homomorphism $\alpha :(L, \circ ) \to (L', \ast )$ 
of a loop $L$ into a loop $L'$ is a normal subloop $N$ of $L$. In our paper we need the following facts (cf. \cite{bruck2}, Lemma 1.3, p. 62).

\begin{Lemma} \label{bruck} Let $L$ be a loop with multiplication group $Mult(L)$, with inner mapping group $Inn(L)$ and with identity element $e$. 

\medskip
\noindent
(i) Let $\alpha $ be a homomorphism, with kernel $N$ of the loop $L$ onto the loop $\alpha (L)$. Then $\alpha $ induces a homomorphism of the group $Mult(L)$ onto the group $Mult(\alpha (L))$. 

Denote by $M(N)$ the set $\{ m \in Mult(L); \ x N=m(x) N,\ \hbox{for \ all} \ x \in L \}$. 
Then $M(N)$ is a normal subgroup of $Mult(L)$ and the multiplication group of the factor loop $L/N$ is isomorphic to $Mult(L)/M(N)$.

\medskip
\noindent
(ii) For every normal subgroup $\mathcal{N}$ of $Mult(L)$ the orbit $\mathcal{N}(e)$ is a normal subloop of $L$. Moreover, $\mathcal{N} \le M(\mathcal{N}(e))$.  
\end{Lemma}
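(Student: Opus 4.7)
The plan is to handle (i) and (ii) in turn, in each case pushing everything back to the normality hypothesis and the first isomorphism theorem. For part (i), since $N=\ker\alpha$ is a normal subloop of $L$ (a fact already recorded in the paper), each left or right translation of $L$ descends to a well-defined translation of the quotient loop $L/N\cong\alpha(L)$. This gives, on generators of $Mult(L)$, the assignment $\lambda_a\mapsto\lambda_{\alpha(a)}$ and $\rho_a\mapsto\rho_{\alpha(a)}$, which I would extend multiplicatively to a homomorphism $\bar\alpha\colon Mult(L)\to Mult(\alpha(L))$ characterised by $\bar\alpha(m)(\alpha(x))=\alpha(m(x))$. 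Surjectivity is immediate since the image contains every generator of $Mult(\alpha(L))$. As for the kernel, $\bar\alpha(m)=\mathrm{id}$ iff $\alpha(m(x))=\alpha(x)$ for every $x\in L$, iff $m(x)\in xN$ for every $x$---which is exactly the defining condition for $M(N)$. The first isomorphism theorem then yields $M(N)\triangleleft Mult(L)$ and $Mult(L)/M(N)\cong Mult(L/N)$.

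For part (ii), set $N=\mathcal N(e)$. The observation driving everything is that normality of $\mathcal N$ in $Mult(L)$ converts orbits into cosets on \emph{both} sides:
\[
\mathcal N(x)=\mathcal N\lambda_x(e)=\lambda_x\mathcal N(e)=xN,\qquad \mathcal N(x)=\rho_x\mathcal N(e)=Nx.
\]
Consequently $xN=Nx$ for every $x$. The subloop axioms for $N$ then fall out immediately: $e\in N$; and for $a,b\in N$ one has $aN=\mathcal N(a)=N\ni b$, forcing $ab$ and the unique solution $a\backslash b$ of $ay=b$ to lie in $N$, with the right-sided versions being analogous. For normality of $N$ I would verify the congruence property: if $a'=n_1(a)$ and $b'=n_2(b)$ with $n_1,n_2\in\mathcal N$, then
\[
a'b'=\rho_{b'}(n_1(a))=\bigl(\rho_{b'}n_1\rho_{b'}^{-1}\bigr)(ab'),\qquad ab'=\lambda_a(n_2(b))=\bigl(\lambda_a n_2\lambda_a^{-1}\bigr)(ab),
\]
and both conjugates belong to $\mathcal N$, so $a'b'\in\mathcal N(ab)$. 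Thus the product descends to the orbit space, realising $L/N$ as a quotient loop of $L$ with kernel $N$. Finally, any $m\in\mathcal N$ sends $x$ into $\mathcal N(x)=xN$, so $m\in M(N)$, establishing $\mathcal N\le M(N)$.

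The only step requiring genuine care is the congruence argument in (ii): because $L$ need not be associative, one cannot rearrange $a'b'$ algebraically, and the two conjugation identities above---supplied precisely by normality of $\mathcal N$ in $Mult(L)$---are doing the real work. Everything else is mechanical bookkeeping with the first isomorphism theorem.
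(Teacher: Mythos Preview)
Your argument is correct. Note, however, that the paper does not actually supply a proof of this lemma: it merely records the statement and cites Bruck's \textit{A Survey of Binary Systems} (Lemma~1.3, p.~62). So there is no ``paper's own proof'' to compare against beyond that reference; what you have written is essentially the standard argument one finds in Bruck.

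Two minor points of presentation. In part~(i), defining $\bar\alpha$ ``on generators and extending multiplicatively'' is slightly dangerous phrasing, since one would then have to check relations; the cleaner route---which you in fact also indicate---is to observe that normality of $N$ forces every $m\in Mult(L)$ to permute the cosets $xN$, so $m\mapsto\bar m$ is automatically a homomorphism of permutation groups, with image generated by all $\lambda_{aN},\rho_{aN}$ and kernel exactly $M(N)$. In part~(ii), after establishing that multiplication descends to the orbit space, you might add one line noting that the quotient has unique left and right division because each $\lambda_a,\rho_a$ permutes the $\mathcal N$-orbits bijectively (again by normality of $\mathcal N$); this is what guarantees $L/N$ is genuinely a loop, not just a groupoid.
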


The theory of topological loops $L$ is the theory of the 
continuous binary operations  
$(x,y) \mapsto x \cdot y, \ (x,y) \mapsto x/y, \ (x,y) \mapsto x 
\backslash y $ on the topological manifold $L$. If $L$ is a connected topological loop 
then the left translations $\lambda _a$ as well as the right translations $\rho _a$, $a \in L$  are homeomorphisms of $L$.

Every connected topological loop having a Lie group  as the group topologically generated by the left translations is realized on a homogeneous space $G/H$, where $G$ is a connected Lie group, $H$ is a closed subgroup containing no non-trivial normal subgroup of $G$ and $\sigma :G/H \to G$ is  a continuous sharply transitive section with 
$\sigma (H)=1 \in G$ such that the subset $\sigma (G/H)$ generates $G$.  The multiplication  of  $L$ on the  
space $G/H$  is  defined by $x H \ast y H=\sigma (x H) y H$ and  the group $G$ is  the group topologically generated by the left translations of $L$. Moreover, the subgroup $H$ is the stabilizer of the identity element 
$e \in L$ in the group $G$ (cf. \cite{loops}, Section 1.3).

\bigskip
\noindent
\centerline{\bf Proofs }  

\bigskip
\noindent
If $L$ is a connected topological loop having a Lie group as the group $Mult(L)$ topologically generated by all left and right translations, then $Mult(L)$ acts transitively and effectively as a topological transformation group on $L$. All transitive transformation groups on a $2$-dimensional manifold are classified by Lie \cite{lie} and Mostow in \cite{mostow}, §10, pp. 625-635.

\begin{Lemma} \label{affinities} 
If the group $Mult(L)$ topologically generated by all left and right translations of a $2$-dimensional connected topological proper loop is a Lie group then the group $Mult(L)$ is solvable.   
\end{Lemma}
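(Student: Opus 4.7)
The plan is to invoke the Lie--Mostow classification of connected transitive effective Lie transformation groups on a connected $2$-manifold and rule out every non-solvable candidate. Since $Mult(L)$ acts transitively and effectively on the $2$-dimensional manifold $L$, its identity component $Mult(L)^0$ belongs to the finite Lie--Mostow list, and I would single out those entries whose Levi factor is non-trivial. Up to local isomorphism the possible Levi factors are $SO_3(\mathbb R)$, $PSL_2(\mathbb R)$ and $PSL_3(\mathbb R)$; these appear either directly (acting on $S^2$, on $\mathbb H^2$ or on $\mathbb{RP}^2$) or as the Levi part of a semidirect extension by a solvable radical, such as $SL_2(\mathbb R)\ltimes \mathbb R^2$ acting on $\mathbb R^2$.

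First I would dispose of the compact semisimple branch. If $Mult(L)^0$ is locally isomorphic to $SO_3(\mathbb R)$, then $H = Inn(L)$ must be $1$-dimensional and compact, forcing $L \simeq SO_3(\mathbb R)/SO_2(\mathbb R) \simeq S^2$. But the continuous sharply transitive section $\Lambda(L)\colon L \to Mult(L)$ required for the loop structure would be a continuous section of the principal $SO_2(\mathbb R)$-bundle $SO_3(\mathbb R) \to S^2$, which has non-zero Euler class and admits none. This kills the entire $SO_3$-branch of the list.

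For each remaining non-solvable candidate I would proceed by a case analysis using the loop-theoretic tools in Lemma~\ref{kepka}: the subgroup $H = Inn(L)$ has trivial core in $Mult(L)$, and the two transversals $\Lambda(L)$, $\mathcal R(L)$ are $H$-connected and jointly generate $Mult(L)$. For each $Mult(L)^0$ on the list I would enumerate the closed codimension-$2$ subgroups $H$ with trivial core, describe the possible continuous sections $\sigma$ with $\sigma(H)=1$ whose image generates $Mult(L)^0$, and then impose the $H$-connectedness relation $a^{-1}b^{-1}ab \in H$ for $a\in \Lambda(L)$, $b\in \mathcal R(L)$. The simplicity of the Levi factor should force $\sigma$ to coincide, modulo $H$, with a group homomorphism, making $L$ associative and contradicting the proper loop hypothesis.

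The delicate case, and the main obstacle, is the $PSL_2(\mathbb R)$-branch together with its semidirect extensions by an abelian radical, where no topological obstruction is available. Here the contradiction must be drawn purely from the interplay between the commutator condition characterising $H$-connectedness, the requirement that $\langle \Lambda(L),\mathcal R(L)\rangle = Mult(L)$, and the simplicity of the Levi factor; one has to show that every continuous section satisfying these constraints is multiplicative, so that $L$ degenerates to a group. Dispatching these remaining cases in a uniform way is the technical heart of the argument.
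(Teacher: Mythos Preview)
Your proposal is a plan rather than a proof: you correctly isolate the $PSL_2(\mathbb R)$ branch (together with its semidirect extensions by an abelian radical) as the crux, but you then only describe what ``should'' happen---that simplicity of the Levi factor forces every $H$-connected section to be a homomorphism modulo $H$---without supplying any argument. Since this is, in your own words, ``the technical heart of the argument,'' what you have written is an outline with the hard part missing. There is no a priori reason why simplicity alone forces a continuous section to be multiplicative; this is precisely the step that has to be established, and it is not clear your proposed mechanism even works.

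The paper takes a genuinely different route and avoids your transversal-by-transversal enumeration. It first passes to the simply connected cover, so that $L\cong\mathbb R^2$ (via \cite{loops}, Theorem~19.1); this disposes of the $SO_3$ and $PSL_3$ possibilities at once, so your Euler-class argument, while correct, is unnecessary. For the remaining non-solvable candidates the paper does not study sections or $H$-connectedness at all. Instead it argues through the radical: if the radical is trivial, Mostow's list leaves only $PSL_2(\mathbb R)$ or its square, and both are excluded by cited results (\cite{loops}, Lemma~19.5 and Theorem~19.7). If the radical is non-trivial, one picks an abelian normal subgroup $\mathcal K\lhd Mult(L)$ and analyses the orbit $\mathcal K(e)$ via Bruck's correspondence (Lemma~\ref{bruck}): this orbit is a normal subloop of $L$, and the three cases $\mathcal K(e)=\{e\}$, $\mathcal K(e)=L$, and $\dim\mathcal K(e)=1$ are each eliminated by a short structural argument (effectivity, uniqueness of the sharply transitive normal subgroup, and an effectivity/normality contradiction inside the codimension-one normal subgroup $M$), again leaning on results from \cite{loops}. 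The key device you are missing is this orbit/normal-subloop machinery, which converts the problem into one about how normal subgroups of $Mult(L)$ project onto normal subloops of $L$, and thereby sidesteps the delicate section analysis you propose.
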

\begin{proof} Let $L$ be a $2$-dimensional  connected topological proper loop such that the group $Mult(L)$ topologically generated by all left and right translations is a non-solvable Lie group.   
We may assume that $L$ is simply connected, otherwise we would consider the universal covering of $L$. Then $L$ is homeomorphic to $\mathbb R^2$ (cf. Theorem 19.1 in \cite{loops}, p. 249). 

If the radical $\mathcal R$ of $Mult(L)$ is trivial then the Lie algebra of the group $Mult(L)$ is semisimple and according to \cite{mostow} (§10) the Lie group $Mult(L)$  
is locally isomorphic either to the group $PSL_2(\mathbb R)$ or to the product $PSL_2(\mathbb R) \times PSL_2(\mathbb R)$. But these groups are excluded in Lemma 19.5  and in Theorem 19.7 in \cite{loops}, pp. 252-253. 

If $\hbox{dim}\ \mathcal R \ge 1$ then there exists a normal subgroup $\mathcal K$ of $Mult(L)$ which is abelian. 
According to Lemma \ref{bruck}  the orbit $\mathcal K(e)$ is a normal subloop of $L$. 
For $\mathcal K(e)=e$ the inner mapping group $Inn(L)$ contains the non-trivial normal subgroup $\mathcal K$ of $Mult(L)$ which is a contradiction. 

If the orbit $\mathcal K(e)$ is the whole loop $L$ then $\hbox{dim} \mathcal K=2$ otherwise the group $Mult(L)$ does not act effectively on $L$. Moreover, the group $\mathcal K$ operates sharply transitively on $L$.  
Hence we have $Mult(L)={\mathcal K} \rtimes Inn(L)$. If the group $Mult(L)$ is non-solvable, then it is  the semidirect product of the $2$-dimensional abelian group $\mathcal K$ by a Lie group $S$ locally isomorphic either to $GL_2(\mathbb R)$ (cf. Subcase III.8 and IV.2, p. 635) or to 
$SL_2(\mathbb R)$ (cf. Subcase III.1, p. 633, and Subcase IV.1, p. 635). In all these cases the subgroup locally isomorphic to $SL_2(\mathbb R)$ acts irreducibly on $\mathcal K$ which is the unique $2$-dimensional sharply transitive normal subgroup of the group $Mult(L)$. Hence we can identify the elements of $L$ with the elements of $\mathcal K$. But then the group $\mathcal K$ is contained in the group topologically generated by the left translations of $L$ and this gives a contradiction to Corollary 17.8 in \cite{loops}, p. 218. 

Hence the orbit $\mathcal K(e)$ is a $1$-dimensional normal subloop of $L$. 
As the factor loop $L/\mathcal K(e)$ is a connected $1$-dimensional loop such that the group topologically generated by all its left and right translations is a factor group of the group $Mult(L)$ we have $L/\mathcal K(e)$ is a $1$-dimensional Lie group (see Theorem 18.18 in \cite{loops}, p. 248).  Then the group $Mult(L)$ contains a normal subgroup $M$ of codimension $1$ such that ${\mathcal K} \le M$ and the group $M$ leaves every left coset $\{ x \mathcal{K}(e); \ x \in L \}$ invariant (see Lemma \ref{bruck}).  

According to \cite{mostow}, §10, if the group $Mult(L)$ is non-solvable, then it is locally isomorphic either to the direct product $PSL_2(\mathbb R) \times \mathcal{L}_2$, where 
$\mathcal{L}_2 =\{ x \mapsto a x+b; \ a>0, b \in \mathbb R \}$ (cf. Subcase II.10, p. 632),  or to the semidirect product of a normal group $\mathcal K$ isomorphic to $\mathbb R^n$, 
$n \ge 2$, with a Lie group $S$ locally isomorphic to $GL_2(\mathbb R)$ such that the subgroup locally isomorphic to $SL_2(\mathbb R)$ acts irreducibly on $\mathcal K$ (cf. Subcase III.8 and IV.2, p. 635). 
The groups locally isomorphic to $PSL_2(\mathbb R) \times \mathcal{L}_2$ are excluded in Lemma 
19.5 in \cite{loops} (p. 252). If the group $Mult(L)$ is locally isomorphic to 
$\mathcal K \rtimes GL_2(\mathbb R)$, then the subgroup $M$ is locally isomorphic to 
$\mathcal K \rtimes SL_2(\mathbb R)$. 
Since $\hbox{dim}\ \mathcal{K} \ge 2$ the group $\mathcal{K}$ has a subgroup $\hat{ \mathcal{K}}$ of codimension $1$ such that 
$\hat{ \mathcal{K}}(e)=e$. As $\mathcal{K}$ acts transitively on the $1$-dimensional Lie group $\mathcal{K}(e)$ the group $\hat{\mathcal{K}}$ 
fixes $\mathcal{K}(e)$ elementwise. As $\hat{\mathcal{K}} < M$ the group  
$M$ does not act effectively on $\mathcal{K}(e)$. Then there exists a normal subgroup $N$ of $M$ which fixes every element of 
$\mathcal{K}(e)$ and hence  $N \cap \mathcal K= \hat{ \mathcal{K}}$.  Since the abelian group $\mathcal{K}$ is the unique normal subgroup of $M$ we have a contradiction and the Lemma is proved.  
\end{proof}

\bigskip
\noindent
\begin{Rem} \label{solvablemult} If the group $Mult(L)$ topologically generated by all left and right translations of a  $2$-dimensional connected simply connected topological proper loop is a solvable Lie group, then $Mult(L)$ is a semidirect product of the abelian group $M \cong \mathbb R^n$, for some $n \ge 2$, by a group $S$ isomorphic to 
$\mathbb R$, such that $M=Z \times M_1$, where $Z \cong \mathbb R$ is the centre of $Mult(L)$ and $M_1 \cong \mathbb R^{n-1}$ is the stabilizer of 
$e \in L$ in $Mult(L)$ which does not contain any non-trivial normal subgroup of $Mult(L)$ (cf. \cite{loops}, Theorem 28.1, p. 338).  \end{Rem} 

\noindent
{\bf Proof of Theorem 1}
\begin{proof}  
According to Lemma \ref{affinities} the multiplication group $Mult(L)$ is solvable. As the loop $L$ is proper the group $Mult(L)$ has dimension at least $3$. We may assume that $L$ is simply connected and hence it is homeomorphic to $\mathbb R^2$. Using the classification of Mostow (cf. \cite{mostow}, §10) every solvable Lie group with dimension $\ge 3$ acting transitively on the plane $\mathbb R^2$ is locally isomorphic to one of the Lie groups in the Subcases I.3, II.1, II.3, II.5, II.7, II.11, II.12, II.13, III.3, III.4, III.5 and III.7. It follows from Remark \ref{solvablemult} that the commutator subgroup of $Mult(L)$ is abelian and $Mult(L)$ 
has a $1$-dimensional centre.    
Hence a direct computation shows that the group $Mult(L)$ is isomorphic either to the  $3$-dimensional non-commutative nilpotent Lie group or to the direct product of a $1$-dimensional Lie group with the $2$-dimensional non-commutative solvable Lie group or its Lie algebra has the form: 
\begin{equation} {\bf mult_1(L)}=\langle \frac{\partial }{\partial x}, w(x) \frac{\partial }{\partial y}, \cdots , w^{(n)}(x) \frac{\partial }{\partial y} \rangle , n> 1, \hbox{(cf. Subcase II.3, p. 628)}. \nonumber  \end{equation} 
\noindent
The $3$-dimensional groups are excluded in \cite{loops} 
by Theorem 23.12 (p. 308) and by Theorem 23.7 (p. 299). For every $n>1$ the abelian normal subgroup $M$ of 
codimension $1$ of the Lie group  $Mult_1(L)$ 
belonging to the Lie algebra  ${\bf mult_1(L)}$ is generated by the set $\{ w(x) \frac{\partial }{\partial y}, w^{(1)}(x) \frac{\partial }{\partial y}, \cdots , w^{(n)}(x) \frac{\partial }{\partial y} \}$, $n> 1$.   
The group $M$ has $1$-dimensional centre of the group  $Mult_1(L)$ precisely if the Lie algebra of $M$ contains the generator $\frac{\partial }{\partial y}$ (cf. \cite{mostow}, Lemma 1, p. 628). Hence the function $w(x)$ has the shape $x^n$, $n >1$. Then for every $n>1$ 
the group  $Mult_1(L)$ is  nilpotent with maximal nilindex. Putting $e_1= \frac{\partial }{\partial x}$, $e_2= x^n \frac{\partial }{\partial y}$, $e_3= x^{n-1} \frac{\partial }{\partial y}$, $\cdots $, $e_{n+1}= x \frac{\partial }{\partial y}$, $e_{n+2}= \frac{\partial }{\partial y}$ we obtain the first assertion. 

As $M=Z \times Inn(L)$ (see Remark \ref{solvablemult}), where the centre of $Mult_1(L)$ 
belongs to the Lie algebra $\langle e_{n+2} \rangle $ the Lie algebra  of the group $Inn(L)$ is given by 
\begin{equation} {\bf inn(L)}= \langle e_2+a_1 e_{n+2}, e_3+a_2 e_{n+2}, \cdots , e_{n+1}+a_n e_{n+2} \rangle , \ \ a_i \in \mathbb R, i=1, \cdots , n.  \nonumber \end{equation} 
Using the automorphism $\varphi $ of the Lie algebra ${\bf mult_1(L)}=\langle e_1, e_2, \cdots , e_{n+2} \rangle $ defined by 
{\small \begin{eqnarray} 
\varphi (e_1) & = & e_1, \nonumber \\
\varphi (e_2) & = & e_2-n a_n e_3 -n a_{n-1} e_4- n a_{n-2} e_5- \cdots -n a_2 e_{n+1}- a_1 e_{n+2}, 
\nonumber \\ 
\varphi (e_3) & = & e_3- (n-1) a_n e_4 -(n-1) a_{n-1} e_5- \cdots -(n-1) a_3 e_{n+1} - a_2 e_{n+2}, \nonumber \\ 
\vdots \nonumber \\ 
\varphi (e_n) & = & e_n-2 a_n e_{n+1}- a_{n-1} e_{n+2}, \nonumber \\ 
\varphi (e_{n+1}) & = & e_{n+1}-a_n e_{n+2}, \nonumber \\
\varphi (e_{n+2}) & = & e_{n+2} \nonumber \end{eqnarray} }
the last assertion follows. 

Since the group $G$ topologically generated by the left translations of $L$ acts  also transitively on the plane $\mathbb R^2$ and it is an at least $3$-dimensional subgroup of an elementary filiform  Lie group $Mult(L)$ 
the classification of Mostow (cf. \cite{mostow}, §10) gives the second assertion. 
\end{proof}

\noindent
{\bf Proof of Theorem 2}
\begin{proof}   
As  the  Lie algebra ${\bf g}=\langle e_1, e_2, \cdots , e_{n+2} \rangle $, $n \ge 1$, of the elementary filiform Lie group $G=\mathbb F _{n+2}$ 
is isomorphic to the Lie algebra of matrices 
{\small \begin{equation} \left \{ \begin{pmatrix} 
0 & a_1 & a_2 & \dots & a_{n-1} & a_n & b \\
0 & 0 & 0 &  \dots & 0 & 0 & -c \\
0 & -2c & 0 & \dots & 0 & 0 & 0 \\
0 & 0  & -3c & \dots & 0& 0 & 0 \\
\hdotsfor{7} \\
0 & 0  & 0 & \dots & -nc & 0 & 0 \\
0 & 0  & 0 & \dots & 0 & 0 & 0 \end{pmatrix}; \ a_i,b,c \in \mathbb R, i=1,2,\cdots , n  \right \} \nonumber \end{equation} }
\noindent
we can represent the Lie group $G$ with the group 
of matrices 
{\small \begin{equation} G=\{ g(c,a_1,a_2, \cdots , a_{n-1}, a_n, b)= \nonumber \end{equation}
\begin{equation} \begin{pmatrix} 
1 & a_1 & a_2  & \dots & a_{n-1} & a_n & b \\
0 & 1 & 0  & 0 & \dots  & 0 & -c \\
0 & -2c & 1 & 0 & \dots  & 0 & c^2 \\
0 & 3c^2  & -3c & 1 & \dots & 0 & -c^3 \\
\hdotsfor{7} \\
0 & (-1)^{n-1} \binom{n}{1} c^{n-1}  & (-1)^{n-2} \binom{n}{2} c^{n-2} & \dots  & (-1)  \binom{n}{n-1} c^{1}  & 1 & (-1)^n c^n \\
0 & 0  & 0 & \dots  &  0 & 0 & 1 \end{pmatrix}; \nonumber \end{equation} 
\begin{equation} a_i, b, c \in \mathbb R, i=1,2, \cdots , n \}. \end{equation} } 
\noindent
Since all elements of $G$ have a unique decomposition 
\[ g(u,0, \cdots ,0,z) g(0,v_1,v_2, \cdots ,v_n,0) \] 
the continuous functions 
$v_1(u,z)$, $v_2(u,z)$, $\cdots $, $v_n(u,z)$ determine a continuous section $\sigma :G/H \to G$ with $H=\{ g(0,v_1,v_2, \cdots ,v_n,0);$  $ v_i \in \mathbb R, i=1, \cdots ,n \}$ 
given by 
\begin{eqnarray}
g(u,0, \cdots ,0,z)H & \mapsto & g(u,0, \cdots ,0,z) g(0,v_1(u,z),v_2(u,z), \cdots ,v_n(u,z),0) \nonumber \\
& = & g(u,v_1(u,z),v_2(u,z), \cdots ,v_n(u,z),z). \nonumber \end{eqnarray}
\noindent
The image $\sigma (G/H)$ acts sharply transitively on the factor space $G/H$ if and only if for every 
$(u_1,z_1), (u_2,z_2) \in \mathbb R^2$ the equation 
\begin{equation} g(u,v_1(u,z),v_2(u,z), \cdots ,v_n(u,z),z) g(u_1,0, \cdots ,0,z_1)= \nonumber \end{equation} 
\begin{equation} g(u_2,0, \cdots ,0,z_2) g(0,t_1,t_2, \cdots ,t_n,0)  \end{equation}
has a unique solution $(u,z) \in \mathbb R^2$ with a suitable element $g(0,t_1,t_2, \cdots ,t_n,0) \in H$. From $(4)$ we obtain the equations  
{\small \begin{eqnarray} 
u & = & u_2-u_1 \nonumber \\ 
t_1 &=& v_1(u,z)-2 u_1 v_2(u,z)+ \cdots +(-1)^{n-1} \binom{n}{1} u_1^{n-1}v_n(u,z) \nonumber \\
t_2 &=& v_2(u,z)-3 u_1 v_3(u,z)+ \cdots +(-1)^{n-2} \binom{n}{2} u_1^{n-2}v_n(u,z) \nonumber \\
\vdots \nonumber \\ 
t_n &=& v_n(u,z) \nonumber \\
0 &=& z+z_1-z_2-u_1 v_1(u,z)+u_1^2 v_2(u,z)+ \cdots +(-1)^n u_1^n v_n(u,z). \nonumber \end{eqnarray} }  
\noindent 
Hence the equation $(4)$ has a unique solution if and only if 
for every $u_0=u_2-u_1$ and $u_1 \in \mathbb R$ the function 
$f:z \mapsto z-u_1 v_1(u_0,z)+u_1^2 v_2(u_0,z)+ \cdots +(-1)^n u_1^n v_n(u_0,z): \mathbb R \to \mathbb R$ is a bijective mapping. 
Let be 
$\psi _1 < \psi _2 \in \mathbb R$ then $f(\psi _1) \neq f(\psi _2)$, e.g. 
$f(\psi _1) < f(\psi _2)$. 
We consider the inequality 
{\small \begin{equation}  0 < f(\psi _2)-f(\psi _1) =  \nonumber  \end{equation}  
\begin{equation} \psi _2 - \psi _1 - u_1[ v_1(u_0, \psi _2)- v_1(u_0, \psi _1)]+u_1^2 [v_2(u_0, \psi _2)- v_2(u_0, \psi _1)]+ \cdots + \nonumber \end{equation} 
\begin{equation} (-1)^n u_1^n [v_n(u_0, \psi _2)- v_n(u_0, \psi _1)]. \end{equation} } 
If for every $i=1, \cdots ,n$ the function $v_i(u,z)$ does not depend on the variable $z$ then $f$ is monoton and the continuous functions $v_1(u), v_2(u), \cdots ,v_n(u)$ determine a $2$-dimensional topological loop $L$. 
Now we represent the loop multiplication in the coordinate system $(u,z) \mapsto g(u, 0, \cdots ,0, z)H$: The multiplication $(u_1,z_1) \ast (u_2, z_2)$ of $L$ will be determined if we apply the left multiplication map $\sigma (g(u_1,0, \cdots ,0,z_1)H)=g(u_1,v_1(u_1),v_2(u_1), \cdots ,v_n(u_1),z_1)$ to the left coset 
$g(u_2,0, \cdots ,0,z_2)H$ and in the coset of the image element we have to find the element which lies in the set $\{ g(u,0, \cdots ,0,z)H; \ u,z \in \mathbb R\}$. We obtain 
{\small \begin{equation} (u_1,z_1) \ast (u_2,z_2)= 
(u_1+u_2, z_1+z_2-u_2 v_1(u_1)+u_2^2 v_2(u_1)+ \cdots +(-1)^n u_2^n v_n(u_1)).  \nonumber 
\end{equation} } 
\newline
\noindent
The loop $L$ is proper precisely if the set 
$\sigma (G/H)=\{ g(u,v_1(u),v_2(u), \cdots, v_n(u),z); \\ u,z \in \mathbb R \}$ 
generates the whole group $G$.    
The set $\sigma (G/H)$ contains the centre of $G$  
\[ Z=\{ g(0,0, \cdots ,0,z); \ z \in \mathbb R \} \]
and the subset 
\[ F=\{ g(u,v_1(u),v_2(u), \cdots ,v_n(u),0); \ u \in \mathbb R \}. \]
The computation of the product  
\[ g(u_1,v_1(u_1),v_2(u_1), \cdots ,v_n(u_1),0) g(u_2,v_1(u_2),v_2(u_2), \cdots ,v_n(u_2),0)  \]  
yields that the group $Z$ and the subgroup $<F>$ topologically generated by the set $F$ generates $G$ if and only if the $(n+1)$-th component $v_n(u_1)+v_n(u_2)$ of the elements of $<F>$ are not a homomorphic image of the first component $u_1+u_2$. This is the case precisely if the function $v_n$ is non-linear and the first assertion follows. 

According to Proposition 18.16 in \cite{loops}, p. 246, the filiform Lie group $G$  coincides with the group topologically generated by all translations of the loop $L$  given by the multiplication (1) 
if and only if for every $y \in L$ the map $f(y): x \mapsto y \lambda _x \lambda _y^{-1}:L \to L$ is an element of $H=\{ g(0,t_1, \cdots ,t_n,0); t_i \in \mathbb R, i=1, \cdots ,n \} $. This is equivalent to the 
condition that the mapping 
{\small \begin{eqnarray} g(x,0, \cdots ,0,y) H & \mapsto & [g(u,v_1(u),v_2(u), \cdots ,v_n(u),z)]^{-1} \nonumber \\
& & [g(x,v_1(x),v_2(x), \cdots ,v_n(x),y)] g(u,0, \cdots ,0,z) H \nonumber \end{eqnarray} }
\noindent
has the form 
{\small \begin{equation} g(x,0, \cdots ,0,y) H \mapsto g(0,s_1(u,z),s_2(u,z), \cdots ,s_n(u,z),0) g(x,0, \cdots ,0,y) H \nonumber \end{equation} }
for suitable functions $s_1(u,z),s_2(u,z), \cdots ,s_n(u,z)$. This gives the relation 
\begin{equation} g(x,v_1(x), \cdots ,v_n(x),y)g(u,0, \cdots ,0,z) H= \nonumber \end{equation} 
\begin{equation} g(u,v_1(u), \cdots ,v_n(u),z)g(0,s_1(u,z), \cdots ,s_n(u,z),0) 
 g(x,0, \cdots ,0,y) H \nonumber \end{equation} 
or the equation 
{\small \begin{equation}
g(x,v_1(x), \cdots ,v_n(x),y)g(u,t_1, \cdots ,t_n,z) =   \nonumber  \end{equation} 
\begin{equation} g(u,v_1(u), \cdots ,v_n(u),z)g(0,s_1(u,z), \cdots ,s_n(u,z),0) g(x,0, \cdots ,0,y)  \end{equation} } 
for a suitable $g(0,t_1, \cdots ,t_n,0) \in H$. 
The equation (6) yields the equations 
\noindent
{\small \begin{eqnarray} t_1 &=& s_1(u,z)+ \cdots + (-1)^{n-1} \binom{n}{1} x^{n-1} s_n(u,z)+ \nonumber \\
& & v_1(u)-v_1(x)+ \cdots +(-1)^{n-1} \binom{n}{1} x^{n-1}v_n(u)- (-1)^{n-1} \binom{n}{1} u^{n-1}v_n(x)  \nonumber  \\
t_2 &=& s_2(u,z)+ \cdots + (-1)^{n-2} \binom{n}{2} x^{n-2} s_n(u,z)+  \nonumber \\
& & v_2(u)-v_2(x)+ \cdots +(-1)^{n-2} \binom{n}{2} x^{n-2} v_n(u)-(-1)^{n-2} \binom{n}{2} u^{n-2} v_n(x) \nonumber \\
\vdots \nonumber \\ 
t_n &=& s_n(u,z)+v_n(u)-v_n(x) \nonumber \end{eqnarray} }
and 
{\small \begin{equation}  -x s_1(u,z)+ \cdots + (-1)^{n} x^{n} s_n(u,z)- v_1(u) x+ \cdots + (-1)^{n} x^{n} v_n(u) =  \nonumber \end{equation}
\begin{equation} -u v_1(x)+ \cdots +(-1)^n u^n v_n(x). \end{equation} }
\noindent
Since the right hand side of the equation (7) does not depend on the variable $z$ we have $s_i(u,z)=s_i(u)$ 
for all $i=1, \cdots ,n$. 
Hence the equation (6) is satisfied precisely if there are continuous functions $s_i:\mathbb R \to \mathbb R$, $i=1, \cdots ,n$, depending on the variable $u$ such that the equation (7) holds. The 
loop $L$ is commutative precisely if $s_i(u)=0$ for all $i=1, \cdots ,n$  (cf. Lemma 18.16 in \cite{loops}, p. 246). 
If the functions $v_i:\mathbb R \to \mathbb R$, $i=1,2,\cdots ,n$ are polynomials, then the comparison of the coefficients yields the last assertion. 
\end{proof}

\begin{Lemma} \label{subalgebra} Let $V$ be a non-commutative subalgebra of the elementary filiform Lie algebra 
$\mathbb F _{n+2}$, $n \ge 1$. Then $V=V_i$ has a basis $\{ e_1+t_1, e_i, e_{i+1}, \cdots ,e_{n+2} \}$ with a fixed $i \in \{ 2, \cdots ,n+1 \}$ and $t_1 \in \langle e_2, e_3, \cdots , e_{i-1} \rangle $. 
\end{Lemma}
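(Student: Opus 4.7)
The plan is to leverage the structure of $\mathbb{F}_{n+2}$ as an extension of a one-dimensional piece (spanned by $e_1$) by the abelian ideal $I=\langle e_2,\ldots,e_{n+2}\rangle$, on which $\mathrm{ad}(e_1)$ acts as a nilpotent shift, and then classify all $\mathrm{ad}(e_1)$-invariant subspaces of $I$.

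First, I would observe that $I$ is an abelian ideal of codimension one: indeed, the relations $[e_1,e_i]=(n+2-i)e_{i+1}$ and $[e_i,e_j]=0$ for $i,j\ge 2$ imply both $[I,I]=0$ and $[e_1,I]\subset I$. Consequently, if $V\subseteq I$ then $V$ is abelian, contradicting the hypothesis. So $V$ contains some vector $ae_1+t$ with $a\ne 0$ and $t\in I$; after rescaling I may assume $v_0:=e_1+t'\in V$ with $t'\in I$. Setting $W:=V\cap I$, any $u=be_1+s\in V$ satisfies $u-bv_0=s-bt'\in W$, so $V=\mathbb{R}v_0\oplus W$.

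Next, I would show $W$ is $\mathrm{ad}(e_1)$-stable. Since $V$ is a subalgebra and $W\subset I$ with $I$ abelian, for each $w\in W$ we have $[v_0,w]=[e_1+t',w]=[e_1,w]\in V\cap I=W$. The operator $N:=\mathrm{ad}(e_1)|_I$ acts by $Ne_j=(n+2-j)e_{j+1}$ for $2\le j\le n+1$ and $Ne_{n+2}=0$; since the coefficients $(n+2-j)$ are all nonzero in the relevant range, $N$ is a single nilpotent cyclic operator. I then claim the $N$-invariant subspaces of $I$ are precisely $I_i:=\langle e_i,e_{i+1},\ldots,e_{n+2}\rangle$ for $i\in\{2,\ldots,n+3\}$ (with $I_{n+3}=0$). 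These are clearly invariant; conversely, for any invariant subspace $W\ne 0$, let $i$ be minimal such that some element of $W$ has a nonzero $e_i$-coefficient. Iterating $N$ on such an element produces nonzero vectors with leading term $e_{i+k}$ for $k=0,\ldots,n+2-i$, so $I_i\subset W$; the minimality of $i$ then forces $W\subset I_i$, hence $W=I_i$.

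Having classified $W$, the non-commutativity of $V$ means $[v_0,W]\ne 0$, equivalently $N(W)\ne 0$, which excludes $W=0$ and $W=\langle e_{n+2}\rangle$. Thus $W=I_i$ for some $i\in\{2,\ldots,n+1\}$. Finally, decomposing uniquely $t'=t_1+t_2$ with $t_1\in\langle e_2,\ldots,e_{i-1}\rangle$ and $t_2\in I_i=W\subset V$, I replace $v_0$ by $v_0-t_2=e_1+t_1\in V$. The set $\{e_1+t_1,e_i,e_{i+1},\ldots,e_{n+2}\}$ then spans $V=\mathbb{R}(e_1+t_1)+W$ and is linearly independent (the first vector having the only nonzero $e_1$-component), giving the desired basis.

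The main obstacle is the classification of $N$-invariant subspaces in Step~2; once that is in hand, the rest is formal. One subtlety worth confirming is that the decomposition $t'=t_1+t_2$ genuinely lives inside $V$ (this works only because $t_2\in W\subset V$), which is exactly why the complementary piece $\langle e_2,\ldots,e_{i-1}\rangle$ must appear in the statement rather than being absorbed.
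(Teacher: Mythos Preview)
Your argument is correct. The paper's proof takes a more computational route: it asserts (without detailed justification) that a non-commutative $V$ admits an echelon-form basis $\{e_1+t_1,\,e_i+t_i,\,\ldots,\,e_{n+1}+t_{n+1},\,e_{n+2}\}$ with each $t_{i+j}\in\langle e_{i+j+1},\ldots,e_{n+2}\rangle$, and then performs a successive back-substitution to show that $e_{n+2},e_{n+1},\ldots,e_i$ all lie in $V$. Your approach is more structural: you isolate $W=V\cap I$, observe that it is $\mathrm{ad}(e_1)$-invariant because $I$ is abelian, and then invoke the classification of invariant subspaces for a cyclic nilpotent operator to identify $W=I_i$ directly. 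The paper's method is hands-on and stays close to the given basis, while yours explains \emph{why} the answer must be one of the $I_i$ (it is the Jordan-block structure of $\mathrm{ad}(e_1)|_I$) and would generalize immediately to any Lie algebra that is an abelian-by-one-dimensional extension with a cyclic adjoint action. Your treatment also fills in the step the paper leaves implicit, namely why $V$ must contain an element of the form $e_1+t'$ and why the intersection $V\cap I$ is forced to be one of the descending-chain ideals.
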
 
\begin{proof} If $V$ is not commutative, then  $V$ is a filiform Lie algebra of dimension $n+4-i$ 
with $2 \le i \le n+1$ and has a basis of the shape $\{ e_1+t_1, e_i+t_i, e_{i+1}+ t_{i+1}, 
\cdots , e_{n+1}+t_{n+1}, e_{n+2} \}$ with $t_1 \in \langle e_2, e_3, \cdots ,e_{n+2} \rangle $ and $t_{i+j} \in \langle e_{i+j+1}, \cdots ,e_{n+2} \rangle $, $0 \le j <n+1-i$. A successive addition of a suitable linear combination $\sum \limits _{k=0}^j \lambda _{n+2-k} e_{n+2-k}$ 
to $t_{n+1-j} $ shows that $V$ contains the elements $e_{n+2}, e_{n+1}, \cdots ,e_i$ and the 
assertion follows. \end{proof}

\noindent
{\bf Proof of Theorem 3}
\begin{proof} For $n=1$ the equation (5) in the proof of Theorem 2 is linear in the variable $u_1 \in \mathbb R$. Hence the function $f$ is monoton if and only if $v_1(u,z)=v_1(u)$ and the first assertion follows from Theorem 2. 

Let $G$ be the group of matrices (3) which is isomorphic to $\mathbb F_{n+2}$ (cf. Proof of Theorem 2) and let $H$ be the subgroup $\{ g(0,t_1,t_2,\cdots ,t_n,0); \ t_i \in \mathbb R, i=1,2, \cdots ,n \}$.   
The set $\Lambda _{v_1}=\{ \lambda _{(u,v)}; \ (u,v) \in L_{v_1} \}$ 
of all left translations of the loop $L_{v_1}$ defined by (2) in the group $G$ has the shape  
\[ \Lambda _{v_1}=\{ g(u,v_1(u),0,0, \cdots , 0, -\frac{v_1(u) u}{2}+z); \ u,z \in \mathbb R \}.  \]  
An arbitrary transversal $T$ of the group $H$ in the group $G$ has the form 
\[ T=\{ g(x,h_1(x,y), \cdots ,h_n(x,y), y);\  x,y \in \mathbb R \}, \]
where $h_j(x,y): \mathbb R^2 \to \mathbb R$, $j=1,2,\cdots ,n$ are continuous  functions with $h_j(0,0)=0$. 
According to Lemma \ref{kepka} the group $G$ is isomorphic to the multiplication group $Mult(L)$ of the loop $L_{v_1}$ if and only if 
$[T, \Lambda _{v_1}] \subset H$ and the set $\{ \Lambda _{v_1}, T \}$ generates the group $G$.  
For all $x,y,u,z \in \mathbb R$ the products $a^{-1} b^{-1} a b$ for every $a \in T$ and 
$b \in \Lambda _{v_1}$ are contained in $H$ if and only if the equation 
{\small \begin{equation} x v_1(u)=(-1)^{n+1} u^n h_n(x,y)+(-1)^{n} u^{n-1} h_{n-1}(x,y)+ \cdots +(-1)^2 u h_1(x,y)  \end{equation} }
\noindent
holds for all $x,y,u \in \mathbb R$. 
If $x=0$ then the equation (8) reduces to 
{\small \begin{equation} 0=(-1)^{n+1} u^n h_n(0,y)+(-1)^{n} u^{n-1} h_{n-1}(0,y)+ \cdots +(-1)^2 u h_1(0,y). \end{equation} } 
\noindent
Since the polynomials $u, u^2, \cdots , u^n$ are linearly independent the equation (9) is satisfied if and only if 
$h_n(0,y)=h_{n-1}(0,y)= \cdots =h_1(0,y)=0.$ 
As the function $v_1: \mathbb R \to \mathbb R$ depends only on the variable $u$ and the functions $h_i(x,y):\mathbb R^2 \to \mathbb R$, $i=1,2, \cdots , n$ are independent from the variable $u$, the equation (8) holds precisely if $h_1(x,y)=a_1 x$, $h_2(x,y)=a_2 x$, $\cdots $, $h_n(x,y)=a_n x$, where $a_1,a_2, \cdots , a_n \in \mathbb R$. According to Lemma \ref{subalgebra} the set $\{ \Lambda _{v_1}, T \}$ generates the group $G$ if and only if 
$a_n$ is different from $0$ since then the Lie algebra of the non-commutative group generated by the set $\{ \Lambda _{v_1}, T \}$ contains elements of the shape $e_2+s$ with $s \in \langle e_3, e_4, \cdots , e_{n+2} \rangle $.  
\end{proof}

Author's address: Institute of Mathematics, University of Debrecen, \\
P.O.Box 12, H-4010 Debrecen, Hungary, figula@math.klte.hu


\begin{thebibliography}{37}

\bibitem{albert} A. A. Albert, \textit{Quasigroups. I.} Trans. Amer. Math. Soc. {\bf 54}, (1943), 507-519; \textit{Quasigroups. II.} Trans. Amer. Math. Soc. {\bf 55}, (1944), 401-419.    

\bibitem{bruck} R. H. Bruck, \textit{Contributions to the theory of loops}, Trans. Amer. Math. Soc. {\bf 60}, (1946), 245-354. 

\bibitem{bruck2} R. H. Bruck, \textit{A Survey of Binary Systems}, Springer-Verlag, Berlin, Heidelberg, New York, 1971.  


\bibitem{figula} A. Figula, \textit{$2$-dimensional topological loops with $3$-dimensional solvable left translation group}, submitted for publication 

\bibitem{goze} M. Goze, Yu. Khakimjanov, \textit{Nilpotent and solvable Lie algebras}, in Handbook of Algebra, 
vol. 2 (Ed. M. Hazewinkel), 615-663, Elsevier, 2000.  

\bibitem{lie} S. Lie, F. Engels, \textit{Theorie der Transformationsgruppen}, Band 3, Verlag von B.G. Teubner, Leipzig, 1893. 

\bibitem{loops} P.T. Nagy and K. Strambach, \textit{Loops in groups theory
and Lie theory}, de Gruyter Expositions in Mathematics. 35. Berlin, 
New York, 2002.   

\bibitem{mostow} G.D. Mostow, \textit{Extensibility of Local Lie Groups}, Ann. Math., 
{\bf 52}, No. 3, 1950.  

\bibitem{niemenmaa} M. Niemenmaa, \textit{On the relation between group theory and loop theory}, London Mathematical Society Lecture Note Series 305 (2003), 422-427. 



\bibitem{kepka} M. Niemenmaa and T. Kepka, \textit{On Multiplication Groups of Loops}, Journal of Algebra {\bf 135} (1990), 112-122. 

\bibitem{vesanen} A. Vesanen, \textit{Solvable Groups and Loops}, J. of Algebra, {\bf 180} (1996), 862-876. 

\bibitem{vesanen2} A. Vesanen, \textit{Finite classical groups and multiplication groups of loops}, Math. Proc. Camb. Philos. Soc. {\bf 117}, No. 3, (1995), 425-429.  

\end{thebibliography}
\end{document}